\newcommand{\ind}[1]{\mathds{1}_{#1}}
\newcommand{\R}{\mathbb{R}}
\newcommand{\E}{\mathbb{E}}
\newcommand{\Ac}{\mathcal{A}}
\newcommand{\Bc}{\mathcal{B}}
\newcommand{\Hc}{\mathcal{H}}
\newcommand{\Nc}{\mathcal{N}}
\newcommand{\Sc}{\mathcal{S}}
\renewcommand{\d}{\delta}
\newcommand{\D}{\Delta}
\renewcommand{\epsilon}{\varepsilon}
\newcommand{\eps}{\varepsilon}
\newcommand{\z}{\zeta}
\renewcommand{\r}{\rho}
\newcommand{\s}{\sigma}
\renewcommand{\phi}{\varphi}
\renewcommand{\l}{\lambda}
\renewcommand{\L}{\Lambda}
\newcommand{\loi}{\mathscr{L}}
\def\Dro{\smash{{D}^{\!\!\!\!\raise4pt\hbox{$\scriptstyle o$}}}}
\def\Bro{\smash{{B}^{\!\!\!\!\raise4pt\hbox{$\scriptstyle o$}}}}
\def\Ccro{\smash{{\mathcal{C}}^{\!\!\!\raise4pt\hbox{$\scriptstyle o$}}}}
\def\Aro{\smash{{A}^{\!\!\!\raise5pt\hbox{$\scriptstyle o$}}}}
\def\Aro2{\smash{{A}^{\!\!\!\raise4pt\hbox{$\scriptstyle o$}}}}
\newcommand{\limsupn}{\underset{n \to +\infty}{\mathrm{limsup}}\,}
\newcommand{\liminfn}{\underset{n \to +\infty}{\mathrm{liminf}}\,}
\newtheorem{theo}{Theorem}
\newtheorem{prop}[theo]{Proposition}
\newtheorem{lem}[theo]{Lemma}
\renewenvironment{proof}{\noindent{\bf Proof.}}{\qed}
\begin{document}

\renewcommand{\contentsname}{Contents}
\renewcommand{\refname}{\textbf{References}}
\renewcommand{\abstractname}{Abstract}

\begin{center}
\begin{Huge}
The Curie-Weiss Model of SOC\medskip

in Higher Dimension
\end{Huge} \bigskip \bigskip \bigskip 

\begin{Large} Matthias Gorny \end{Large} \smallskip
 
\begin{large} {\it Universit\'e Paris-Sud and ENS Paris} \end{large} \bigskip \bigskip \bigskip 
\end{center}

\begin{abstract}
\noindent We build and study a multidimensional version of the Curie-Weiss model of self-organized criticality we have designed in~\cite{CerfGorny}. For symmetric distributions satisfying some integrability condition, we prove that the sum~$S_n$ of the randoms vectors in the model has a typical critical behaviour. The fluctuations are of order $n^{3/4}$ and the limiting law has a density proportional to the exponential of a fourth-degree polynomial.
\end{abstract}
\bigskip \bigskip

{\it AMS 2010 subject classifications:} 60F05 60K35

{\it Keywords:} Ising Curie-Weiss, SOC, Laplace's method

\bigskip \bigskip \bigskip

\section{Introduction}

\noindent In~\cite{CerfGorny} and~\cite{GORGaussCase}, we introduced a \textit{Curie-Weiss model of self-organized criticality} (SOC): we transformed the distribution associated to the generalized Ising Curie-Weiss model by implementing an automatic control of the inverse temperature which forces the model to evolve towards a critical state. It is the model given by an infinite triangular array of real-valued random variables $(X_{n}^{k})_{1\leq k \leq n}$ such that, for all $n \geq 1$, $(X^{1}_{n},\dots,X^{n}_{n})$ has the distribution
\[\frac{1}{Z_{n}}\exp\left(\frac{1}{2}\frac{(x_{1}+\dots+x_{n})^{2}}{x_{1}^{2}+\dots+x_{n}^{2}}\right)\ind{\{x_{1}^{2}+\dots+x_{n}^{2}>0\}}\,\prod_{i=1}^{n}d\r(x_{i}),\]
where $\r$ is a probability measure on $\R$ which is not the Dirac mass at~$0$, and where $Z_n$ is the normalization constant. We extended the study of this model in~\cite{DynCWSOC},~\cite{Gorny3},~\cite{GornyThesis} and~\cite{GorVar}. For symmetric distributions satisfying some exponential moments condition, we proved that the sum $S_{n}$ of the random variables behaves as in the typical critical generalized Ising Curie-Weiss model: the fluctuations are of order $n^{3/4}$ and the limiting law is $C \exp(-\lambda x^{4})\,dx$ where $C$ and $\lambda$ are suitable positive constants. Moreover, by construction, the model does not depend on any external parameter. That is why we can conclude it exhibits the phenomenon of self-organized criticality (SOC). Our motivations for studying such a model are detailed in~\cite{CerfGorny}. \medskip

\noindent Let $d\geq 1$. In this paper we define a $d$-dimensional version of the Curie-Weiss model of SOC, i.e, such that the $X^{k}_{n}$, $1\leq k\leq n$, are random vectors in $\R^d$. Let us start by defining the \textit{$d$-dimensional generalized Ising Curie-Weiss model}. Let~$\r$ be a symmetric probability measure on $\R^d$ such that
\[\forall v \geq 0 \qquad \int_{\R^d}\exp(v\|z\|^{2})\,d\r(z)<\infty\,.\]
Assume that its covariance matrix
\[\Sigma=\int_{\R^{d}}z\,^{t}\!z\,d\r(z)\]
is invertible (i.e, $\r$ is non-degenerate on $\R^d$). The $d$-dimensional generalized Ising Curie-Weiss model associated to~$\r$ and to the temperature field $T$ (which is here a $d\times d$ symmetric positive definite matrix) is defined through an infinite triangular array of random vectors $(X_{n}^{k})_{1\leq k \leq n}$ such that, for all $n \geq 1$, $(X^{1}_{n},\dots,X^{n}_{n})$ has the distribution
\[\frac{1}{Z_{n}(T)} \exp\left(\frac{1}{2n} \big\langle T^{-1}(x_1+\dots+x_n),(x_1+\dots+x_n)\big\rangle \right) \prod_{i=1}^{n} d\r(x_{i})\,,\]
where $Z_{n}(T)$ is a normalization. When $d=1$ and $\r=(\d_{-1}+\d_{1})/2$, we recover the classical Ising Curie-Weiss model. Let $S_n=X_{n}^{1}+\dots+X_{n}^{n}$ for any $n\geq 1$. By extending the methods of Ellis and Newmann (see~\cite{EN}) to the higher dimension, we obtain that, under some~\guillemotleft~sub-Gaussian~\guillemotright~hypothesis on $\r$, if $T-\Sigma$ is a symmetric positive definite matrix, then
\[\frac{S_{n}}{\sqrt{n}} \overset{\loi}{\underset{n \to +\infty}{\longrightarrow}}\Nc_{d}\big(0,T(T-\Sigma)^{-1}\Sigma\big),\]
the centered $d$-dimensional Gaussian distribution with covariance matrix\linebreak $T(T-\Sigma)^{-1}\Sigma$. If $T=\Sigma$ (critical case) then
\[\frac{S_{n}}{n^{3/4}} \overset{\loi}{\underset{n \to +\infty}{\longrightarrow}}C_{\r} \exp\left(-\phi_{\r}(s_1,\dots,s_d)\right)\,ds_1\,\cdots\,ds_d,\]
where $C_{\r}$ is a normalization constant and $\phi_{\r}$ is an homogeneous polynomial of degree four in $\R[X_1,\dots,X_d]$ such that $\exp(-\phi_{\r})$ is integrable with respect to the Lebesgue measure on $\R^d$. Detailed proofs of these results are given in section~23 of~\cite{GornyThesis}. These results highlight that the non-critical fluctuations are normal (in the Gaussian sense) while the critical fluctuations are of order $n^{3/4}$ (or eventually $n^{1-1/2k}$, $k\geq 3$, in the degenerate cases of sub-Gaussian measures, see~\cite{EN}). \medskip

\noindent Now we try to modify this model in order to construct a $d$-dimensional SOC model. As in~\cite{CerfGorny}, we search an automatic control of the temperature field $T$, which would be a function of the random variables in the model, so that, when $n$ goes to $+ \infty$, $T$ converges towards the critical value $\Sigma$ of the model. We start with the following observation: if $(Y_{n})_{n \geq 1}$ is a sequence of independent random vectors with identical distribution~$\r$, then, by the law of large numbers,
\[\frac{\widehat{\Sigma}_{n}}{n}\overset{\mbox{a.s}}{\underset{n \to +\infty}{\longrightarrow}} \Sigma,\]
where
\[\forall n \geq 1 \qquad \widehat{\Sigma}_{n}=X^{1}_{n}\,^{t}\!(X^{1}_{n})+\dots+X^{n}_{n}\,^{t}\!(X^{n}_{n}).\]
This convergence provides us with an estimator of $\Sigma$. If we believe that a similar convergence holds in the $d$-dimensional generalized Ising Curie-Weiss model, then we are tempted to~\guillemotleft~replace $T$ by $\widehat{\Sigma}_{n}/n$~\guillemotright~in the previous distribution. Hence, in this paper, we consider the following model:\medskip

\noindent \textbf{The model.} Let $(X_{n}^{k})_{n \geq d,\,1\leq k \leq n}$ be an infinite triangular array of random vectors in $\R^{d}$ such that, for any $n \geq d$, $(X^{1}_{n},\dots,X^{n}_{n})$ has the distribution $\smash{\widetilde{\mu}_{n,\r}}$, the probability measure on $(\R^{d})^{n}$ with density
\begin{align*}
(x_{1},\dots,x_{n})\longmapsto\frac{1}{Z_{n}} \exp\left(\frac{1}{2} \left\langle \left(\sum_{i=1}^{n}x_{i}\,^{t}\!x_{i}\right)^{-1}\left(\sum_{i=1}^{n}x_{i}\right),\left(\sum_{i=1}^{n}x_{i}\right) \right\rangle\right)
\end{align*}
with respect to $\r^{\otimes n}$ on the set
\[D_{n}^{+}=\left\{\, (x_{1},\dots,x_{n}) \in (\R^{d})^{n}: \mathrm{det}\left(\sum_{i=1}^{n}x_{i}\,^{t}\!x_{i}\right)>0\,\right\},\]
where
\[Z_{n}=\int_{D_{n}^{+}}\exp\left(\frac{1}{2} \left\langle \left(\sum_{i=1}^{n}x_{i}\,^{t}\!x_{i}\right)^{-1}\left(\sum_{i=1}^{n}x_{i}\right),\left(\sum_{i=1}^{n}x_{i}\right) \right\rangle\right)\prod_{i=1}^{n}\,d\r(x_{i}).\]
For any $n \geq d$, we denote $S_{n}=X^{1}_{n}+\dots+X^{n}_{n} \in \R^{d}$ and
\[T_{n}=X^{1}_{n}\,^{t}\!(X^{1}_{n})+\dots+X^{n}_{n}\,^{t}\!(X^{n}_{n}).\]

\noindent In section~\ref{Preliminaries}.\ref{WellDefined}, we prove rigorously that this model is well-defined, i.e.,\linebreak  $Z_n \in \,]0,+\infty[$ for any $n\geq d$.\medskip

\noindent According to the construction of this model and according to our results in one dimension, we expect that the fluctuations are of order $n^{3/4}$. Our main theorem states that they are indeed:

\begin{theo} Let $\r$ be a symmetric probability measure on $\R^d$. Suppose that
\[\exists v_0> 0 \qquad \int_{\R^{d}}\exp\left(v_0\|z\|^2\right)\,d\r(z)<\infty
\tag{$*$}\]
and that the $\r$-measure of any vector hyperplane of $\R^d$ is less than $1/\sqrt{e}$.\linebreak Let $\Sigma$ be the covariance matrix of $\r$ and let $M_4$ be the function defined on~$\R^d$~by
\[\forall z\in \R^d\qquad M_4(z)=\sum_{1\leq i,j,k,l\leq d}\left(\int_{\R^d}y_iy_jy_ky_l\,d\r(y)\right)z_iz_jz_kz_l.\]

\noindent \emph{Law of large numbers:} Under $\smash{\widetilde{\mu}_{n,\r}}$, $(S_n/n,T_n/n)$ converges in probability to~$(0,\Sigma)$.\medskip

\noindent \emph{Fluctuation result:} Under $\smash{\widetilde{\mu}_{n,\r}}$,
\[\frac{S_n}{n^{3/4}}\overset{\loi}{\underset{n \to \infty}{\longrightarrow}}
\frac{\displaystyle{\exp\left(-\frac{1}{12}M_4\big(\Sigma^{-1}z\big)\right)\,dz}}{\displaystyle{\int_{\R^d}\exp\left(-\frac{1}{12}M_4\big(\Sigma^{-1}u\big)\right)\,du}}.\]
\label{CVloiDimd}
\end{theo}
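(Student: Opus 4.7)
The plan is to apply a Hubbard--Stratonovich linearization. Since $T_n=\sum_i x_i\,^{t}\!x_i$ yields $\langle T_n w, w\rangle=\sum_i\langle x_i, w\rangle^2$, the Gaussian identity
\[
\exp\!\Big(\tfrac{1}{2}\langle T_n^{-1}S_n, S_n\rangle\Big) = \sqrt{\tfrac{\det T_n}{(2\pi)^d}}\int_{\R^d}\exp\!\Big(-\tfrac{1}{2}\langle T_n w, w\rangle + \langle S_n, w\rangle\Big)\,dw
\]
turns the tilting factor into a product $\prod_i e^{f(x_i,w)}$ with $f(x,w)=\langle x,w\rangle-\tfrac{1}{2}\langle x,w\rangle^2$, at the cost of an auxiliary variable $W$. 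On the enlarged space, the joint law $\bar\mu_n$ on $(X_1,\dots,X_n,W)$ has density proportional to $\sqrt{\det T_n}\prod_i e^{f(x_i,w)}$ with respect to $\r^{\otimes n}\otimes dw$; its marginal on $W$ is proportional to $\phi_\r(w)^n\,\E_{\r_w^{\otimes n}}[\sqrt{\det T_n}]$, where $\phi_\r(w)=\int e^{f(x,w)}d\r(x)$ and $\r_w(dx)\propto e^{f(x,w)}d\r(x)$ is the tilted one-particle law.

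The decisive algebraic observation is the expansion $e^{u-u^2/2}=1+u-u^3/3-u^4/12+O(u^5)$, whose $u^2$ coefficient cancels. Integrating against the symmetric $\r$ (so that odd moments vanish) gives
\[
\phi_\r(w)=1-\tfrac{1}{12}M_4(w)+O(\|w\|^6),\qquad \log\phi_\r(w)=-\tfrac{1}{12}M_4(w)+O(\|w\|^6),
\]
so $-\log\phi_\r$ has a \emph{quartic} minimum at the origin, forcing the Laplace concentration scale of $W$ to be $n^{-1/4}$. With $\tilde W=n^{1/4}W$ one obtains $n\log\phi_\r(n^{-1/4}\tilde w)\to -M_4(\tilde w)/12$ pointwise, so $\tilde W$ converges in law to a probability measure with density proportional to $\exp(-M_4(\tilde w)/12)$ (integrable since $M_4(w)\geq c\|w\|^4$ by homogeneity and non-degeneracy of $\Sigma$). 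Applicability of Laplace's method requires $\phi_\r(w)<1$ for $w\neq 0$, which follows from the symmetric rewriting $\phi_\r(w)=\int e^{-\langle x,w\rangle^2/2}\cosh(\langle x,w\rangle)\,d\r(x)\leq 1$, strict by invertibility of $\Sigma$. Tail control (together with the $\sqrt{\det T_n}$ prefactor) is supplied by the exponential moment hypothesis~$(*)$ and by the hyperplane hypothesis $\r(H)<1/\sqrt{e}$, which governs the ray-limits $\phi_\r(tw_0)\to \r(H_{w_0})$ as $t\to+\infty$.

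Conditional on $W=w$, the law of $(X_i)$ is $\r_w^{\otimes n}$ reweighted by $\sqrt{\det T_n}=n^{d/2}\sqrt{\det(T_n/n)}$. Since $T_n/n\to \E_{\r_w}[X\,^{t}\!X]$ in probability under $\r_w^{\otimes n}$, the weight is asymptotically a deterministic function of $w$ and perturbs the CLT for $S_n$ only at subleading order. A direct expansion gives
\[
\E_{\r_w}[X]=\Sigma w-\tfrac{1}{3}M_3(w)+O(\|w\|^5),\qquad \mathrm{Cov}_{\r_w}(X)=\Sigma-\Sigma w\,^{t}\!w\,\Sigma+O(\|w\|^4),
\]
where $(M_3(w))_i=\sum_{jkl}\big(\int y_iy_jy_ky_l\,d\r(y)\big)w_jw_kw_l$. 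For $w=n^{-1/4}\tilde w$ this yields $\E[S_n\mid W]\sim n^{3/4}\Sigma\tilde w$ and $\mathrm{Var}(S_n\mid W)=O(n)$, so $S_n/n^{3/4}-\Sigma\tilde W\to 0$ in probability. Combined with the marginal limit of $\tilde W$, this gives $S_n/n^{3/4}\Rightarrow\Sigma\tilde W_\infty$; the change of variable $z=\Sigma\tilde w$ (Jacobian $1/\det\Sigma$) then produces exactly the density $\propto\exp(-M_4(\Sigma^{-1}z)/12)$ claimed. The law of large numbers follows at once: $S_n/n=(S_n/n^{3/4})n^{-1/4}\to 0$ by tightness of the fluctuation limit, and $T_n/n\to\Sigma$ in probability because under $\bar\mu_n$ both $W\to 0$ and $T_n/n-\E_{\r_W}[X\,^{t}\!X]\to 0$.

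The main obstacle is the uniform control of the marginal of $W$: showing that $\phi_\r(w)^n\sqrt{\det T_n}$ is integrable in $w$ and that the Laplace contribution outside a fixed neighbourhood of $0$ is negligible, which is precisely where the hyperplane hypothesis $\r(H)<1/\sqrt{e}$ and the exponential moment bound~$(*)$ are decisive. A secondary difficulty is quantifying the effect of the $\sqrt{\det T_n}$ coupling in the conditional law of $(X_i)$ given $W$: this is handled via a joint central limit theorem for $(S_n,T_n)$ under $\r_w^{\otimes n}$, showing that fluctuations of $T_n/n$ around its mean are of order $n^{-1/2}$ and do not contribute to the leading-order law of $S_n/n^{3/4}$.
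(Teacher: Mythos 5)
Your route is genuinely different from the paper's. You apply the Hubbard--Stratonovich identity directly to the density, so that $\widetilde{\mu}_{n,\r}$ becomes the marginal of a joint law on $(X_1,\dots,X_n,W)$ in which the particles decouple conditionally on $W$ up to the weight $\sqrt{\det T_n}\,\ind{D_n^+}$, and you read off both the $n^{-1/4}$ scale of $W$ and the limit law from the one-particle function $\phi_\r(w)=\int e^{\langle x,w\rangle-\langle x,w\rangle^2/2}\,d\r(x)$. The local computations are correct: the cancellation of the quadratic coefficient in $e^{u-u^2/2}$, the expansion $\phi_\r(w)=1-M_4(w)/12+O(\|w\|^6)$, the bound $\phi_\r\leq 1$ with equality only in hyperplane-supported directions, the conditional mean $\E_{\r_w}[X]=\Sigma w+O(\|w\|^3)$, and the identification $S_n/n^{3/4}\approx\Sigma\widetilde W$ followed by the change of variables $z=\Sigma\widetilde w$. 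The paper proceeds in the opposite order: it first proves the law of large numbers for $(S_n/n,T_n/n)$ by Cram\'er's theorem and Varadhan's lemma (this is where the hyperplane hypothesis actually enters, through a lower bound $I(x,M)\geq-\ln\r(\mathrm{Ker}(M)^{\perp})$ on the boundary of $\D$ where $M$ degenerates), and then proves a fluctuation theorem for the self-normalized quantity $T_n^{-1/2}S_n/n^{1/4}$. There, after adding an independent Gaussian $W/n^{1/4}$, changing variables and symmetrizing with Rademacher signs, the integrand becomes $\exp\sum_i g(n^{1/4}\langle z,a_{i,n}\rangle)$ with $g(y)=\ln\cosh y-y^2/2$ and $\sum_i a_{i,n}\,{}^{t}\!a_{i,n}=\mathrm{I}_d$ exactly; this exact identity yields the deterministic domination $\sum_i g\leq -c\|z\|^4/(1+\|z\|^2/\sqrt n)$, which makes the localization immediate, and Slutsky's lemma with $T_n/n\to\Sigma$ converts the result into the statement about $S_n/n^{3/4}$.

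The genuine gap in your proposal is exactly the step you yourself label the main obstacle: the localization of the $W$-marginal. The total mass of your joint measure is $(2\pi)^{d/2}Z_n\sim Cn^{d/4}$, so you must show that $\int_{\|w\|\geq\delta}\bigl(\int_{D_n^+}\sqrt{\det T_n}\,\prod_ie^{f(x_i,w)}\,d\r^{\otimes n}\bigr)\,dw=o\bigl(n^{d/4}\bigr)$. The ray-limit $\phi_\r(tw_0)\to\r(H_{w_0})<e^{-1/2}$ is the right ingredient but does not close this estimate on its own: $\phi_\r$ does not tend to $0$ at infinity, the region $\{\|w\|\geq\delta\}$ has infinite Lebesgue volume, and the factor $\E_{\r_w^{\otimes n}}[\sqrt{\det T_n}]$ is not uniformly bounded in $w$; the crude bounds (e.g. $f\leq 1/2$, or Cauchy--Schwarz against $\int e^{2f}d\r\leq e$) only give a far-field contribution of order $e^{n/2}$, which is the same as the trivial bound $Z_n\leq e^{n/2}$ and is useless here. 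One needs genuine quantitative decay of the $n$-particle integrand in $w$, uniformly over directions, for instance via $\cosh(u)e^{-u^2/2}\leq e^{-c\min(u^2,1)}$ together with a lower bound on the number of indices $i$ with $|\langle x_i,w\rangle|$ bounded away from $0$; this is the real analytic content of the proof and it is precisely what the paper's self-normalization, plus its separate large-deviation proof of the law of large numbers, are designed to avoid. A secondary, fixable issue is that your deduction of $T_n/n\to\Sigma$ uses a law of large numbers under the conditional law given $W$, which is $\r_w^{\otimes n}$ reweighted by $\sqrt{\det T_n}\,\ind{D_n^+}$; transferring the unweighted law of large numbers to the reweighted one requires uniform integrability of $\sqrt{\det(T_n/n)}$ uniformly over the relevant range of $w$, which again presupposes that the $W$-localization has already been established.
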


\noindent We prove that the matrix $\Sigma$ is invertible in subsection~\ref{Preliminaries}.\ref{SigmaInv}. After giving large deviations results in subsection~\ref{Preliminaries}.\ref{LargeDev}, we show the law of large numbers in section~\ref{ConvProb}. Finally, in section~\ref{AstuceCasGenDimd}, we prove that the function
\[z\longmapsto \exp\left(-M_4\left(\Sigma^{-1/2}z\right)/12\right)\]
is integrable on $\R^d$ and that $S_n/n^{3/4}$ converges in distribution to the announced limiting distribution.\medskip

\noindent Remark : in the case where $d=1$, we have already proved this theorem in~\cite{CerfGorny},~\cite{Gorny3} and~\cite{GorVar}. Moreover we succeeded to remove the hypothesis on the $\r$-measure of the vector hyperplanes (which turns out to be simply $\r(\{0\})<1/\sqrt{e}$ when $d=1$) with a conditioning argument. It seems not immediate that such arguments could extend in the case where~$d\geq 2$. However this assumption together with condition $(*)$ are technical hypothesis and we believe that the result should be true if $\r$ is only a non-degenerate symmetric probability measure on $\R^d$ having a finite fourth moment.

\section{Preliminaries}
\label{Preliminaries}

\noindent In this section, we suppose that $\r$ is a symmetric probability measure on $\R^d$ satisfying $(*)$ and such that the $\r$-measure of any vector hyperplane of $\R^d$ is less than $1/\sqrt{e}$.

\subsection{$\Sigma$ is a symmetric positive definite matrix}
\label{SigmaInv}

\noindent Since $\r$ satisfies condition $(*)$, the covariance matrix~$\Sigma$ is well-defined. It is of course a symmetric positive semi-definite matrix. Let $\Hc$ be an hyperplane of $\R^d$. If $\Hc$ is a vector hyperplane then, by hypothesis, $\r(\Hc)<1/\sqrt{e}<1$. If $\Hc$ is an affine (but not vector) hyperplane then,
\[\r(\Hc)=\r(-\Hc)=\frac{1}{2}(\r(\Hc)+\r(-\Hc))\leq \frac{1}{2}<1,\]
since $\r$ is symmetric and $\Hc\cap(-\Hc)=\varnothing$. In both cases $\r(\Hc)<1$ thus $\r$ is a non-degenerate probability measure on~$\R^d$. As a consequence $\Sigma$ is positive definite (see lemma~III.7 of~\cite{GornyThesis}).\medskip

\subsection{The model is well-defined}
\label{WellDefined}

\noindent Let us prove that the model is well defined, i.e., $Z_{n} \in \,]0,+\infty[$ for any $n\geq d$.

\begin{lem} Let $n\geq 1$ and let $x_{1},\dots,x_{n}$ be vectors in $\R^{d}$. We denote
\[A_{n}=x_{1}\,^{t}\!x_{1}+\dots+x_{n}\,^{t}\!x_{n}.\]
$\star$ If $n<d$, then $A_{n}$ is non-invertible.\\
$\star$ If $n=d$, then $A_{n}$ is invertible if and only if $(x_{1},\dots,x_{n})$ is a basis of $\R^{d}$.\\
$\star$ If $n>d$ and if the vectors $x_{1},\dots,x_{n}$ span $\R^d$, then $A_{n}$ is invertible.
\end{lem}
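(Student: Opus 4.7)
The approach is to reduce all three statements to a single identification of the kernel of $A_n$. First I would observe that $A_n$ is symmetric positive semi-definite: for any $y \in \R^d$,
\[\langle A_n y, y \rangle \,=\, \sum_{i=1}^{n} \langle x_i\,^{t}\!x_i\, y,\, y \rangle \,=\, \sum_{i=1}^{n} \langle x_i,\, y \rangle^{2} \,\geq\, 0.\]
Consequently $\langle A_n y, y \rangle = 0$ if and only if $\langle x_i, y \rangle = 0$ for every $i$, i.e., if and only if $y$ is orthogonal to $\mathrm{Span}(x_1, \ldots, x_n)$. Since a symmetric positive semi-definite matrix annihilates a vector exactly when its quadratic form does (for instance by diagonalizing in an orthonormal basis, or directly by factoring $A_n = B\,^{t}\!B$ with $B = (x_1 \mid \cdots \mid x_n)$, which gives $\ker A_n = \ker \,^{t}\!B$), I would deduce the clean identification
\[\ker A_n \,=\, \mathrm{Span}(x_1, \ldots, x_n)^{\perp}.\]

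All three items follow at once. The matrix $A_n$ is invertible if and only if $\ker A_n = \{0\}$, hence if and only if the vectors $x_1, \ldots, x_n$ span $\R^d$. If $n < d$, no family of $n$ vectors can span $\R^d$, so $A_n$ is non-invertible. If $n = d$, spanning by exactly $d$ vectors is equivalent to being a basis, which gives the announced equivalence. If $n > d$ and $x_1, \ldots, x_n$ span $\R^d$, the kernel is trivial and $A_n$ is invertible.

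I do not expect any real obstacle here: this is a brief linear algebra preliminary whose sole purpose is to translate the determinantal condition defining $D_n^+$ into the more workable spanning condition used throughout the rest of the paper. The one step that deserves a careful sentence is the identification $\ker A_n = \mathrm{Span}(x_1, \ldots, x_n)^{\perp}$, and the factorization $A_n = B\,^{t}\!B$ disposes of it in one line.
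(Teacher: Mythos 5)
Your proof is correct. It differs mildly but genuinely from the paper's: you prove the single unified identity $\ker A_n=\mathrm{Span}(x_1,\dots,x_n)^{\perp}$ (via the quadratic form $\langle A_ny,y\rangle=\sum_i\langle x_i,y\rangle^2$ together with the standard fact that a positive semi-definite matrix kills a vector exactly when its quadratic form vanishes there), and all three bullets drop out at once. The paper instead argues by cases: for $n\leq d$ it pads the family with zero vectors to form a square matrix $B$ and uses $A_n=B\,^{t}\!B$, so that invertibility of $A_n$ is equivalent to invertibility of $B$; for $n>d$ it extracts a sub-family of $d$ vectors forming a basis and writes $A_n$ as a positive definite matrix plus positive semi-definite ones. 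Your route is slightly more economical and gives a bit more: it shows that for every $n$ the matrix $A_n$ is invertible \emph{if and only if} the $x_i$ span $\R^d$, i.e.\ it also supplies the converse in the case $n>d$, which the paper's statement does not claim. The paper's case split costs a few more lines but stays entirely at the level of matrix factorizations without invoking the kernel/quadratic-form equivalence. Either argument is complete and adequate for the role the lemma plays, namely translating the condition $\det(\sum_i x_i\,^{t}\!x_i)>0$ defining $D_n^+$ into a spanning condition.
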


\begin{proof} $\star$ Let $n\leq d$. If $n<d$, we put $x_{n+1}=\dots=x_{d}=0$. We denote by $B$ the $d\times d$ matrix such that its columns are $x_{1},\dots,x_{d}$. We have then, for any $1\leq k,l \leq d$,
\[(B\,^{t}\!B)_{k,l}=\sum_{i=1}^{d}B_{k,i}B_{l,i}=\sum_{i=1}^{d} x_{i}(k)x_{i}(l)=\sum_{i=1}^{d} (x_{i}\,^{t}\!x_{i})_{k,l}=(A_{n})_{k,l}.\]
Therefore $A_{n}=B\,^{t}\!B$ and thus $A_{n}$ is invertible if and only if $B$ is invertible. As a consequence $A_n$ is invertible if and only if $(x_{1},\dots,x_{d})$ is a basis of $\R^{d}$. In the case where $n<d$, $B$ has at least a null column and thus is not invertible.
\smallskip

\noindent $\star$ Let $n>d$ and assume that the vectors $x_{1},\dots,x_{n}$ span $\R^{d}$. There exists then $1\leq i_1<\dots<i_d\leq n$ such that $(x_{i_1},\dots,x_{i_d})$ is a basis of $\R^d$. As a consequence, by the previous case, $A_{n}$ is the sum of a symmetric positive definite matrix and $n-d$ other symmetric positive semi-definite matrices. Therefore $A_{n}$ is definite thus invertible.
\end{proof}
\medskip

\noindent Let $n\geq d$. The non-degeneracy of $\r$ implies that its support is not included in a hyperplane of $\R^d$. As a consequence
\[\r^{\otimes n}\big(\{\,(x_1,\dots,x_n) \in (\R^d)^n : (x_1,\dots,x_d) \mbox{ is a basis of } \R^d\,\}\big)>0.\]
The previous lemma yields
\[\r^{\otimes n}\big(\{\,(x_1,\dots,x_n) \in (\R^d)^n : x_{1}\,^{t}\!x_{1}+\dots+x_{n}\,^{t}\!x_{n} \mbox{ is invertible}\,\}\big)>0,\]
i.e., $\smash{\r^{\otimes n}(D_{n}^{+})>0}$. Therefore $Z_n>0$.\medskip

\noindent We denote:\smallskip

\indent $\bullet$ $\Sc_{d}$ the space of $d\times d$ symmetric matrices.\smallskip

\indent $\bullet$ $\Sc_{d}^{+}$ the space of all matrices in $\Sc_{d}$ which are positive semi-definite.\smallskip

\indent $\bullet$ $\Sc_{d}^{++}$ the space of all matrices in $\Sc_{d}$ which are positive definite.\medskip

\noindent We introduce the sets
\[\D=\{\,(x,M)\in \R^{d}\times \Sc_{d}^{+}: M-x\,^{t}\!x \in \Sc_{d}^{+}\,\}.\]
and
\[\D^{\!*}=\{\,(x,M)\in \R^{d}\times \Sc_{d}^{++}: M-x\,^{t}\!x \in \Sc_{d}^{+}\,\}.\]
The two following lemma guarantee that $Z_{n}<+\infty$ pour tout $n\geq 1$.

\begin{lem} If $(x,M)\in \D^{\!*}$ then $\langle M^{-1}x,x\rangle \leq 1$.
\end{lem}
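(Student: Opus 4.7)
The plan is to use the assumption that $M - x\,^tx$ is positive semi-definite to derive a universal quadratic inequality in a test vector $y$, and then to choose $y$ cleverly so that both sides of the inequality involve the quantity $\langle M^{-1}x, x\rangle$ we want to bound.

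First, since $M - x\,^tx \in \Sc_d^+$, I would apply this matrix to an arbitrary $y \in \R^d$ and take the inner product with $y$ to obtain
\[\forall y \in \R^d\qquad \langle My,y\rangle \geq \langle (x\,^tx)\,y,\,y\rangle = (^tx\,y)\,\langle x,y\rangle = \langle x,y\rangle^2.\]
Since $M \in \Sc_d^{++}$, it is invertible, and so I would specialise this to $y = M^{-1}x$. The left-hand side collapses to $\langle x, M^{-1}x\rangle$ and the right-hand side to $\langle x, M^{-1}x\rangle^2$, giving
\[\langle M^{-1}x,x\rangle \;\geq\; \langle M^{-1}x,x\rangle^{2}.\]

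To finish, I would note that $M^{-1}$ is itself positive definite (as the inverse of a positive definite matrix), so $t := \langle M^{-1}x,x\rangle \geq 0$. The scalar inequality $t \geq t^2$ with $t \geq 0$ immediately yields $t \leq 1$, which is the claim. There is no substantive obstacle here; the only subtlety is remembering that $M^{-1} \in \Sc_d^{++}$ is what allows us to pass from $t \geq t^2$ to $t \leq 1$ (without it, one could only say $t \in \,]-\infty,0]\cup [1,+\infty[^c$, which is not enough).
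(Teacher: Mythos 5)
Your proof is correct and follows essentially the same route as the paper: both exploit $M-x\,^t\!x\in\Sc_d^+$ to get $\langle x,y\rangle^2\leq\langle My,y\rangle$ for all $y$, then specialise to $y=M^{-1}x$ and conclude from the scalar inequality $t^2\leq t$. The paper handles the last step by splitting into the cases $x=0$ and $x\neq 0$ (dividing by $t>0$ in the latter), which is equivalent to your observation that $t^2\leq t$ forces $t\in[0,1]$.
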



\begin{proof} The matrix $M-x\,^{t}\!x$ is symmetric positive semi-definite. Hence
\[\forall y \in \R^{d}\qquad \langle x,y \rangle^{2}=\langle x\,^{t}\!x\,y,y \rangle \leq \langle My,y\rangle.\]
Applying this inequality to $y=M^{-1}x$, we get
\[\langle x,M^{-1}x \rangle^{2}\leq \langle M^{-1}x,x\rangle.\]
If $x=0$ then $\langle M^{-1}x,x \rangle=0\leq 1$. If $x\neq 0$, since $M\in \Sc^{++}$, we have\linebreak $\langle M^{-1}x,x \rangle > 0$ and thus $\langle M^{-1}x,x \rangle\leq 1$.
\end{proof}

\begin{lem} For any vectors $x_{1},\dots,x_{n}$ in $\R^{d}$ and for any positive real numbers $\l_{1},\dots,\l_{p}$ whose sum is equal to $1$, we have
\[\sum_{i=1}^p\l_i \left(x_i,x_i\,\!^{t}\!x_i\right)\in \D.\]
\label{EnvConv}
\end{lem}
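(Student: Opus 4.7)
The goal is to show that if $(x,M)=\sum_{i=1}^{p}\l_i(x_i,x_i\,^t\!x_i)$ with $\l_i\geq 0$ and $\sum_i\l_i=1$, then $(x,M)\in\D$, i.e., $M\in\Sc_d^+$ and $M-x\,^t\!x\in\Sc_d^+$.

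The first membership is immediate: each rank-one matrix $x_i\,^t\!x_i$ lies in $\Sc_d^+$ since $\langle x_i\,^t\!x_i\,y,y\rangle=\langle x_i,y\rangle^2\geq 0$ for all $y\in\R^d$, and $\Sc_d^+$ is a convex cone, so the convex combination $M=\sum_i\l_i\,x_i\,^t\!x_i$ is symmetric positive semi-definite.

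The substantial part is the second membership, and my plan is to reduce it to the Cauchy--Schwarz inequality (equivalently, to Jensen's inequality for the convex function $t\mapsto t^2$). Fix an arbitrary $y\in\R^d$. On one side,
\[\langle My,y\rangle=\sum_{i=1}^{p}\l_i\langle x_i\,^t\!x_i\,y,y\rangle=\sum_{i=1}^{p}\l_i\langle x_i,y\rangle^2.\]
On the other side, using linearity of the inner product and $x=\sum_i\l_i x_i$,
\[\langle x\,^t\!x\,y,y\rangle=\langle x,y\rangle^2=\left(\sum_{i=1}^{p}\l_i\langle x_i,y\rangle\right)^{\!2}.\]
Cauchy--Schwarz applied to the vectors $(\sqrt{\l_i})_{1\leq i\leq p}$ and $(\sqrt{\l_i}\langle x_i,y\rangle)_{1\leq i\leq p}$ in $\R^p$, using $\sum_i\l_i=1$, yields
\[\left(\sum_{i=1}^{p}\l_i\langle x_i,y\rangle\right)^{\!2}\leq\sum_{i=1}^{p}\l_i\langle x_i,y\rangle^2.\]
Combining the three displays gives $\langle(M-x\,^t\!x)y,y\rangle\geq 0$ for every $y\in\R^d$, so $M-x\,^t\!x\in\Sc_d^+$, completing the proof.

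There is no real obstacle here: the lemma is a clean matrix-valued Jensen inequality, and the only choice to make is to test positive semi-definiteness against an arbitrary $y$ and reduce the matrix statement to the scalar convex inequality $(\sum\l_i a_i)^2\leq\sum\l_i a_i^2$ with $a_i=\langle x_i,y\rangle$. (Note that the statement uses both $n$ and $p$; the proof above works verbatim with either, the index just running over the support of the convex combination.)
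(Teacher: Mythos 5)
Your proof is correct and rests on the same idea as the paper's: the matrix $M-x\,^t\!x$ is the covariance matrix of the discrete probability measure $\nu=\sum_i\l_i\d_{x_i}$, hence positive semi-definite. The only cosmetic difference is that the paper writes the matrix-level identity $M-x\,^t\!x=\sum_i\l_i(x_i-x)\,^t\!(x_i-x)$ directly, whereas you test the quadratic form against an arbitrary $y$ and invoke the scalar inequality $(\sum_i\l_i a_i)^2\leq\sum_i\l_i a_i^2$; these are the same computation.
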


\begin{proof} We denote
\[\nu=\sum_{i=1}^n\l_i\d_{x_i}.\]
This is a probability measure on $\R^{d}$ satisfying
\[m=\int_{\R^{d}} z\,d\nu(z)=\sum_{i=1}^p\l_i x_i\qquad \mbox{and} \qquad S=\int_{\R^{d}}z\,^{t}\!z\,d\nu(z)=\sum_{i=1}^p\l_i\, x_i\,\!^{t}\!x_i\in \Sc_d^{+}.\]
Hence
\[S-m\,\!^{t}\!m=\int_{\R^{d}} (z-m)\,\!^{t}\!(z-m)\,d\nu(z) \in \Sc_d^{+},\]
proving the lemma.
\end{proof}\medskip

\noindent Let $n\geq 1$. These lemma imply that, for any $(x_{1},\dots,x_{n})\in D_n^+$,
\[\left(\frac{1}{n}\sum_{i=1}^{n}x_{i},\frac{1}{n}\sum_{i=1}^{n}x_{i}\,^{t}\!x_{i}\right) \in \D^{\!*}\]
and thus
\[\frac{1}{2} \left\langle \left(\sum_{i=1}^{n}x_{i}\,^{t}\!x_{i}\right)^{-1}\left(\sum_{i=1}^{n}x_{i}\right),\left(\sum_{i=1}^{n}x_{i}\right) \right\rangle\leq \frac{n}{2}.\]
Hence $Z_n\leq e^{n/2}<+\infty$ and the model is well-defined for any $n\geq d$.

\subsection{Large deviations for $(S_n/n,T_n/n)$}
\label{LargeDev}

\noindent As in the one-dimensional case (see~\cite{CerfGorny}), we introduce
\[F: (x,M)\in \D^{\!*} \longmapsto \frac{\langle M^{-1}x,x \rangle}{2}.\]
For any $n\geq d$, the distribution of $(S_{n}/n,T_{n}/n)$ under $\smash{\widetilde{\mu}_{n,\r}}$ is
\[\frac{\displaystyle{\exp(nF(x,M))\ind{\{(x,M) \in \D^{\!*}\}}\,d\widetilde{\nu}_{n,\r}(x,M)}}{\displaystyle{\int_{\D^{\!*}}\exp(nF(s,N))\,d\widetilde{\nu}_{n,\r}(s,N)}},\]
where $\smash{\widetilde{\nu}_{n,\r}}$ is the law of
\[\left(\frac{S_{n}}{n},\frac{T_{n}}{n}\right)=\frac{1}{n}\sum_{i=1}^{n}\left(Y_{i},Y_{i}\,^{t}\!Y_{i}\right)\]
when $Y_{1},\dots,Y_{n}$ are independent random vectors with common law $\r$.\medskip

\noindent We denote by $\langle\,\cdot\,,\,\cdot\,\rangle$ the usual scalar product on $\R^d$ and by $\|\,\cdot\,\|$ the Euclidean norm. We endow $\R^{d}\times \Sc_{d}$ with the scalar product given by
\[((x,M),(y,N))\longmapsto \langle x,y\rangle+\mathrm{tr}(MN)=\sum_{i=1}^{d}x_{i}y_{i}+\sum_{i=1}^{d}\sum_{j=1}^{d}m_{i,j}n_{i,j}.\]
We denote by $\|\,\cdot\,\|_d$ the associated norm.
Notice that
\[\forall z\in \R^{d}\quad\forall A\in \Sc_{d}\qquad\mathrm{tr}(z\,^{t}\!zA)=\sum_{i=1}^{d}\sum_{j=1}^{d}z_{i}z_{j}a_{i,j}=\langle Az,z\rangle.\]
Let $\nu_{\r}$ be the law of $(Z,Z\,^{t}\!Z)$ when $Z$ is a random vector with distribution $\r$.
We define its Log-Laplace $\L$, by
\begin{align*}
\forall (u,A)\in \R^{d}\times \Sc_{d}\qquad\L(u,A)&=\ln \int_{\R^{d}\times \Sc_d}\exp\left(\langle z,y\rangle+\mathrm{tr}(MA)\right)\,d\nu_{\r}(z,M)\\
&=\ln \int_{\R^{d}}\exp\left(\langle u,z\rangle+\langle Az,z\rangle\right)\,d\r(z),
\end{align*}
and its Cram\'er transform $I$ by
\[\forall (x,M) \in \R^d\times \Sc_{d}\qquad I(x,M)=\sup_{(u,A)\in \R^{d}\times\Sc_{d}}\,\big(\,\langle x,u\rangle+\mathrm{tr}(MA)-\L(u,A)\,\big).\]
Let $D_{\L}$ and $D_I$ be the domains of $\R^d\times \Sc_{d}$ where $\L$ and $I$ are respectively finite. All these definitions generalize the case where  $d=1$, treated in~\cite{CerfGorny} and~\cite{Gorny3}.\medskip

\noindent For any $(u,A)\in \R^d\times \Sc_d$, we have
\begin{align*}
\exp\,\L(u,A)&\leq \int_{\R^{d}}\exp\left(\|u\|\,\|z\|+\sqrt{\mathrm{tr}(M^2)}\,\|z\|^2\right)\,d\r(z)\\
&\leq \exp\left(\|(u,M)\|_d\right)+\int_{\R^{d}}\exp\left(\|(u,M)\|_d\,\|z\|^2\right)\,d\r(z).
\end{align*}
Therefore condition $(*)$ is sufficient to ensure that $(0,O_{d})$ belongs to $\Dro_{\L}$, where $O_d$ denotes the $d\times d$ matrix whose coefficients are all zero. As a consequence Cram\'er's theorem (cf.~\cite{DZ}) implies that $(\widetilde{\nu}_{n,\r})_{n\geq 1}$ satisfies the large deviations principle with speed $n$ and governed by $I$.


\section{Convergence in probability of $(S_n/n,T_n/n)$}
\label{ConvProb}

\noindent We saw in the previous section that, under the hypothesis of theorem~\ref{CVloiDimd}, the sequence $(\widetilde{\nu}_{n,\r})_{n\geq 1}$ satisfies the large deviations principle with speed $n$ and governed by $I$. This and Varadhan's lemma (see~\cite{DZ}) suggest that, asymptotically, $(S_n/n,T_n/n)$ concentrates on the minima of the function $I-F$. In subsection~\ref{ConvProb}.\ref{MinI-Fdimd}, we prove that $I-F$ has a unique minimum at $(0,\Sigma)$ on $\D^{\!*}$ and we extend $F$ on the entire closed set $\D$ so that it remains true on $\D$. This is the key ingredient for the proof of the law of large numbers in theorem~\ref{CVloiDimd}, given in subsection~\ref{ConvProb}.\ref{subConvProb}.

\subsection{Minimum de $I-F$}
\label{MinI-Fdimd}

\begin{prop} If $\r$ is a symmetric non-degenerate probability measure on $\R^d$, then
\[\forall x\in \R^d\backslash\{0\}\quad\forall M\in\Sc^{++}_d\qquad I(x,M)>\frac{\langle M^{-1}x,x \rangle}{2}.\]
Moreover, if $\L$ is finite in a neighbourhood of $(0,O_d)$, then the function $I-F$ has a unique minimum at $(0,\Sigma)$ on $\D^{\!*}$.
\label{minI-Fd(1)}
\end{prop}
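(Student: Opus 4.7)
The plan is to bound the supremum defining $I(x,M)$ from below by a carefully chosen explicit test pair $(u,A)$, and then to handle the uniqueness of the minimum of $I-F$ by splitting into cases according to whether $x=0$.

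For the first inequality I would take $u=M^{-1}x$ and $A=-\tfrac{1}{2}u\,^{t}\!u$, so that $A=-\tfrac{1}{2}M^{-1}x\,^{t}\!(M^{-1}x)$. A direct computation yields $\langle x,u\rangle+\mathrm{tr}(MA)=\tfrac{1}{2}\langle M^{-1}x,x\rangle$ and $\langle Az,z\rangle=-\tfrac{1}{2}\langle u,z\rangle^{2}$, so the integrand defining $\L(u,A)$ is $\exp(\langle u,z\rangle-\langle u,z\rangle^{2}/2)$. Since $t-t^{2}/2\leq 1/2$, this integrand is bounded by $e^{1/2}$ everywhere, so the test pair lies in $D_{\L}$ with no extra assumption on~$\r$.

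To control $\L(u,A)$ I would use the symmetry of $\r$ to rewrite the integral as $\int_{\R^{d}}\cosh(\langle u,z\rangle)\,e^{-\langle u,z\rangle^{2}/2}\,d\r(z)$ and invoke the elementary bound $\cosh(t)\leq e^{t^{2}/2}$, which is strict for $t\neq 0$. The strict inequality fires on the set $\{z:\langle u,z\rangle\neq 0\}$, whose complement is the vector hyperplane $u^{\perp}$; by non-degeneracy of $\r$ this complement has $\r$-mass strictly less than~$1$. Hence $\L(u,A)<0$ and
\[I(x,M)\;\geq\;\langle x,u\rangle+\mathrm{tr}(MA)-\L(u,A)\;>\;\tfrac{1}{2}\langle M^{-1}x,x\rangle.\]

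For the \emph{moreover} part I would first check that $F(0,\Sigma)=0$, and that under the extra hypothesis the function $\L$ is $C^{1}$ near $(0,O_{d})$ with gradient $(\int z\,d\r(z),\int z\,^{t}\!z\,d\r(z))=(0,\Sigma)$, so that convex duality gives $I(0,\Sigma)=0$: thus $(0,\Sigma)$ is a zero of $I-F\geq 0$ on $\D^{\!*}$. Uniqueness splits in two: for $x\neq 0$ the first part already gives strict positivity of $I-F$; for $x=0$ and $M\neq\Sigma$, the symmetry-based bound $\L(u,A)\geq \L(0,A)$ (coming from $\cosh\geq 1$ and the symmetry of $\r$) shows that the supremum defining $I(0,M)$ is attained at $u=0$, after which a first-order expansion of $A\mapsto\L(0,A)$ around $O_{d}$, valid thanks to the finiteness assumption, forces $\sup_{A}[\mathrm{tr}(MA)-\L(0,A)]>0$ whenever $M\neq\Sigma$.

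The principal difficulty is pinpointing the right ansatz $A=-\tfrac{1}{2}u\,^{t}\!u$ in step one: this rank-one choice produces the perfect-square combination $\langle u,z\rangle-\langle u,z\rangle^{2}/2$ that unlocks the $\cosh\leq\exp(\cdot^{2}/2)$ inequality, and its uniform boundedness side-steps any integrability issue, so that the first inequality of the proposition can be established under the minimal hypothesis of non-degeneracy alone; the supplementary hypothesis on $\L$ enters only in the uniqueness argument, where differentiability of $\L$ at $(0,O_{d})$ is genuinely needed to identify $(0,\Sigma)$ as a zero and to rule out extra zeros of $I$ on $\{0\}\times\Sc_{d}^{++}$.
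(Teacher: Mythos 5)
Your proof is correct and follows essentially the same route as the paper: the same test pair $u=M^{-1}x$, $A=-\tfrac{1}{2}M^{-1}x\,^{t}\!xM^{-1}$, the same symmetrization into $\int\cosh(\langle u,z\rangle)e^{-\langle u,z\rangle^{2}/2}\,d\r(z)$, and the same use of non-degeneracy to get $\L(u,A)<0$. The only difference is in the \emph{moreover} part, where the paper simply cites an external proposition for the equivalence $I(0,M)=0\Leftrightarrow M=\Sigma$, while you supply a correct direct argument (reduction to $u=0$ via $\cosh\geq 1$, then a first-order expansion of $A\mapsto\L(0,A)$ at $O_{d}$).
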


\begin{proof} Let $x\in \R^d\backslash\{0\}$ and $M\in\Sc^{++}_d$. By taking $A=-M^{-1}x\,^{t}\!xM^{-1}/2$ and $u=M^{-1}x$, we get
\[\langle u,x\rangle+\mathrm{tr}(AM)=\langle M^{-1}x,x\rangle-\frac{1}{2}\mathrm{tr}(M^{-1}x\,^{t}\!x)=\frac{\langle M^{-1}x,x\rangle}{2}.\]
As a consequence
\[I(x,M)\geq \frac{\langle M^{-1}x,x\rangle}{2}-\L\left(M^{-1}x,-\frac{1}{2}M^{-1}x\,^{t}\!xM^{-1}\right).\]
For any $z\in \R^d$, we have $^{t}\!zM^{-1}x=\langle M^{-1}x,z\rangle=\mathrm{tr}(z\,^{t}\!(M^{-1}x)) \in \R$ thus
\[-\frac{1}{2}\mathrm{tr}(z\,^{t}\!zM^{-1}x\,^{t}\!xM^{-1})=-\frac{\langle M^{-1}x,z\rangle}{2}\mathrm{tr}(z\,^{t}\!xM^{-1})=-\frac{\langle M^{-1}x,z\rangle^2}{2}.\]
Therefore
\[\L\left(M^{-1}x,-\frac{1}{2}M^{-1}x\,^{t}\!xM^{-1}\right)=\ln \int_{\R^d}\exp\left(\langle M^{-1}x,z\rangle-\frac{\langle M^{-1}x,z\rangle^2}{2}\right)\,d\r(z).\]
By symmetry of $\r$, we have, for any $s\in \R^d$,
\begin{multline*}
\int_{\R^d}\exp\left(\langle s,z\rangle-\frac{\langle s,z\rangle^2}{2}\right)\,d\r(z)=\int_{\R^d}\exp\left(-\langle s,z\rangle-\frac{\langle s,z\rangle^2}{2}\right)\,d\r(z)\\
=\frac{1}{2}\left(\int_{\R^d}\exp\left(\langle s,z\rangle-\frac{\langle s,z\rangle^2}{2}\right)\,d\r(z)+\int_{\R^d}\exp\left(-\langle s,z\rangle-\frac{\langle s,z\rangle^2}{2}\right)\,d\r(z)\right)\\
=\int_{\R^d}\mathrm{cosh}(\langle s,z\rangle)\,\exp\left(-\frac{\langle s,z\rangle^2}{2}\right)\,d\r(z).
\end{multline*}
As a consequence
\begin{multline*}
\L\left(M^{-1}x,-\frac{1}{2}M^{-1}x\,^{t}\!xM^{-1}\right)=\\
\ln \int_{\R^d}\mathrm{cosh}\left(\langle M^{-1}x,z\rangle\right)\,\exp\left(-\frac{\langle M^{-1}x,z\rangle^2}{2}\right)\,d\r(z).
\end{multline*}
It is straightforward to see that the function $y\longmapsto 1-\mathrm{cosh}(y)\,\exp(-y^2/2)$ is non-negative on $\R$ and vanishes only at~$0$. Hence, for any $z\in \R^d$,
\[\int_{\R^d}\mathrm{cosh}\left(\langle s,z\rangle\right)\,\exp\left(-\frac{\langle s,z\rangle^2}{2}\right)\,d\r(z)\leq 1,\]
and equality holds if and only if $\r(\{z: \langle s,z\rangle=0\})=1$. The non-degeneracy of $\r$ implied that the equality case only holds if $s=0$. Applying this to $s=M^{-1}x\neq 0$, we obtain
\[\L\left(M^{-1}x,-\frac{1}{2}M^{-1}x\,^{t}\!xM^{-1}\right)<0,\]
and thus $I(x,M)>\langle M^{-1}x,x \rangle/2$.\medskip

\noindent Suppose now that $x=0$ and $M\in\Sc^{++}_d$. Then
\[I(x,M)-\frac{\langle M^{-1}x,x \rangle}{2}=I(0,M).\]
If we assume that $\L$ is finite in a neighbourhood of $(0,\dots,0,O_d)$, then $I(0,M)=0$ if and only if $M=\Sigma$ (see proposition~III.4~of~\cite{GornyThesis}). This ends the proof of the lemma.
\end{proof}\medskip

\noindent However, in order to apply Varadhan's lemma, $F$ must be extended to an upper semi-continuous function on the entire closed set $\D$. To this end, we put
\[ \forall (x,M)\in \D\backslash\D^{\!*}\qquad F(x,M)=\frac{1}{2},\]
and it is easy to check that $F$ is indeed an upper semi-continuous function on $\D$.
%
%

\noindent Now we investigate on how the inequality in proposition~\ref{minI-Fd(1)} holds on $\D$.\medskip

\noindent Let $(x,M)\in \R^d\times \Sc_d^+$. We denote by $0\leq \l_1\leq\l_2\leq\dots,\leq \l_d$ the eigenvalues (not necessary distinct) of $M$. There exists an orthogonal matrix $P$ such that $M=PD\,^{t}\!P$, where $D$ is the diagonal matrix such that $D_{i,i}=\l_i$ for any $i\in \{1,\dots,d\}$. We have
\begin{align*}
I(x,M)&=\sup_{(u,A)\in \R^{d}\times\Sc_{d}}\,\big(\,\langle x,u\rangle+\mathrm{tr}(PD\,^{t}\!PA)-\L(u,A)\,\big)\\
&=\sup_{(u,A)\in \R^{d}\times\Sc_{d}}\,\big(\,\langle x,u\rangle+\mathrm{tr}(DA)-\L(u,PA\,^{t}\!P)\,\big).
\end{align*}
Assume that $M\notin \Sc_d^{++}$ and denote by $k=k_M\geq 1$ the dimension of the kernel of $M$. Let $a\in \,]-\infty,0[$. By taking $u=0$ and $A$ the symmetric matrix such that
\[\forall (i,j)\in \{1,\dots,d\}\qquad A_{i,j}=\left\{\begin{array}{ll}
a&\quad \mbox{if}\quad i=j \in \{1,\dots,k\},\\
0&\quad \mbox{otherwise},
\end{array}\right.\]
we obtain
\[I(x,M)\geq -\L(u,PA\,^{t}\!P)=-\ln\int_{\R^d}\exp\,\langle PA\,^{t}\!Pz,z\rangle\,d\r(z),\]
i.e.,
\[\forall a\in \R\qquad I(x,M)\geq-\ln\int_{\R^d}\exp\left(
a\sum_{j=1}^k\big(^{t}\!Pz\big)^2_j\right)\,d\r(z).\]
For any $z\in \R^d$, we have
\begin{align*}
\smash{\sum_{j=1}^k\big(^{t}\!Pz\big)^2_j=0}
&\quad\Longleftrightarrow\quad
\forall j\in \{1,\dots,k\}\quad \langle Pe_j,z\rangle=\langle e_j,^{t}\!Pz\rangle=\big(^{t}\!Pz\big)_j=0\\
&\quad\Longleftrightarrow\quad
\forall x\in \mathrm{Ker}(M)\quad \langle x,z\rangle=0\\
&\quad\Longleftrightarrow\quad
z\in \mathrm{Ker}(M)^{\perp},
\end{align*}
since $(Pe_1,\dots,Pe_k)$ is a basis of $\mathrm{Ker}(M)$ (they are the eigenvectors of $M$ associated to the eigenvalue~$0$). As a consequence
\[\forall z\in \R^d\qquad \exp\left(
a\sum_{j=1}^k\big(^{t}\!Pz\big)^2_j\right)\underset{a\to-\infty}{\longrightarrow}\ind{\mathrm{Ker}(M)^{\perp}}(z).\]
Moreover the left term defines a function which is bounded above by $1$. Therefore the dominated convergence theorem implies that
\[\int_{\R^d}\exp\left(
a\sum_{j=1}^k\big(^{t}\!Pz\big)^2_j\right)\,d\r(z)\underset{a\to-\infty}{\longrightarrow}\r\big(\mathrm{Ker}(M)^{\perp}\big).\]
Whence
\[I(x,M)\geq -\ln\r\big(\mathrm{Ker}(M)^{\perp}\big).\]
So that $I(x,M)>1/2$, it is enough to have $\r\big(\mathrm{Ker}(M)^{\perp}\big)<e^{-1/2}$. Since $\mathrm{Ker}(M)^{\perp}$ is included in some vector hyperplane of $\R^d$, we obtain the following proposition:

\begin{prop} Let $\r$ be a symmetric probability measure on $\R^d$ satisfying $(*)$. Suppose that the $\r$-measure of any vector hyperplane of $\R^d$ is less than $1/\sqrt{e}$. Then $I-F$ has a unique minimum at $(0,\Sigma)$ on $\D$.
\label{minI-Fd(2)}
\end{prop}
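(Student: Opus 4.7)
My plan is to decompose $\D=\D^{\!*}\cup(\D\backslash\D^{\!*})$ and handle each piece separately, showing that $I-F\geq 0$ on $\D$ with equality only at $(0,\Sigma)$. Because $F$ has been extended by the constant value $1/2$ on $\D\backslash\D^{\!*}$, the boundary part reduces to proving the strict lower bound $I(x,M)>1/2$ whenever $M$ is singular, while the interior part is essentially already done.

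For the interior piece $\D^{\!*}$, I will simply invoke proposition~\ref{minI-Fd(1)}. Its hypothesis that $\L$ be finite in a neighbourhood of $(0,O_d)$ is precisely what was established in subsection~\ref{Preliminaries}.\ref{LargeDev} from condition $(*)$; so the proposition delivers at once that $(0,\Sigma)$ is the unique minimum of $I-F$ on $\D^{\!*}$, and its value there is $I(0,\Sigma)-F(0,\Sigma)=0-0=0$.

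For the boundary piece $\D\backslash\D^{\!*}$ I will rely on the lower bound derived immediately before the statement. Writing $M=PDP^{t}$ with $P$ orthogonal and $D$ diagonal, taking $u=0$ and $A=a\,P\mathrm{diag}(1,\dots,1,0,\dots,0)\,^{t}\!P$ in the variational definition of $I$ (with the ones placed in the slots indexing $\mathrm{Ker}(M)$), then letting $a\to-\infty$ and applying dominated convergence, one obtains
\[I(x,M)\,\geq\,-\ln\r\big(\mathrm{Ker}(M)^{\perp}\big).\]
Since $M$ is singular, $\dim\mathrm{Ker}(M)\geq 1$, so $\mathrm{Ker}(M)^{\perp}$ is contained in some vector hyperplane of $\R^d$; the hypothesis therefore gives $\r(\mathrm{Ker}(M)^{\perp})<e^{-1/2}$ and hence $I(x,M)>1/2=F(x,M)$, closing the boundary case.

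The only delicate point is the lower bound on $\D\backslash\D^{\!*}$: the assumption $\r(\Hc)<1/\sqrt{e}$ on vector hyperplanes is calibrated precisely to beat the constant $1/2$ coming from the extension of $F$. Once that step is in hand — and it has already been performed in the text preceding the proposition — the proof reduces to assembling the two cases and noting that $(0,\Sigma)\in\D^{\!*}$ realizes the value~$0$.
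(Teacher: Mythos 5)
Your proof is correct and follows the paper's own route exactly: the text immediately preceding the proposition establishes the bound $I(x,M)\geq-\ln\r\big(\mathrm{Ker}(M)^{\perp}\big)>1/2=F(x,M)$ for singular $M$, and the interior case $\D^{\!*}$ is handled by proposition~\ref{minI-Fd(1)}, whose hypothesis on $\L$ is indeed guaranteed by $(*)$ as shown in subsection~\ref{Preliminaries}.\ref{LargeDev}. Nothing is missing.
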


\subsection{Convergence of $(S_n/n,T_n/n)$~under $\widetilde{\mu}_{n,\rho}$}
\label{subConvProb}

\noindent Let us first prove the following proposition, which is a consequence of Varadhan's lemma.

\begin{prop} Let $\r$ be a symmetric probability measure on $\R^d$ with a positive definite covariance matrix $\Sigma$. We have
\[\liminfn \frac{1}{n}\ln Z_n\geq 0.\]
Suppose that $\r$ satisfies $(*)$ and that the $\r$-measure of any vector hyperplane of~$\R^d$ is less than $1/\sqrt{e}$. If $\Ac$ is a closed subset of $\R^d\times \Sc_{d}$ which does not contain $(0,\Sigma)$, then
\[\limsupn \frac{1}{n}\ln \int_{\D^{\!*}\cap \Ac}\exp\left(\frac{n\langle M^{-1}x,x \rangle}{2}\right)\,d\widetilde{\nu}_{n,\r}(x,M)<0.\]
\label{TypeVaradhan2}
\end{prop}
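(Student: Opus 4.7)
The plan is to combine the LDP for $(\widetilde\nu_{n,\r})_{n \geq 1}$ from subsection~\ref{Preliminaries}.\ref{LargeDev} with the strict inequality $I > F$ on $\D\setminus\{(0,\Sigma)\}$ proved in Proposition~\ref{minI-Fd(2)}, together with the uniform bound $F \leq 1/2$ on $\D$ (which itself follows from the lemma $\langle M^{-1}x,x\rangle \leq 1$ on $\D^{\!*}$ in subsection~\ref{Preliminaries}.\ref{WellDefined} and the prescribed extension). The two claims then follow from the LDP lower bound at $(0,\Sigma)$ and from Varadhan's upper bound away from it, respectively.

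For the lower bound on $Z_n$, I would first observe that $(0,\Sigma)$ lies in the interior of $\D^{\!*}$ (since $\Sigma \in \Sc_d^{++}$ and $M - x\,^{t}\!x$ remains in $\Sc_d^{++}$ for small perturbations around $(0,\Sigma)$), that $F$ is continuous near $(0,\Sigma)$ with $F(0,\Sigma) = 0$, and that $I(0,\Sigma) = 0$ (established inside the proof of Proposition~\ref{minI-Fd(1)}). Given $\eps > 0$, I pick an open ball $B \subset \D^{\!*}$ around $(0,\Sigma)$ on which $F > -\eps$, and bound
\[
Z_n \geq \int_B e^{nF(x,M)}\,d\widetilde\nu_{n,\r}(x,M) \geq e^{-n\eps}\,\widetilde\nu_{n,\r}(B).
\]
The LDP lower bound applied to the open set $B$ gives $\liminf_n (1/n)\ln \widetilde\nu_{n,\r}(B) \geq -\inf_B I \geq -I(0,\Sigma) = 0$, so $\liminf_n (1/n)\ln Z_n \geq -\eps$; letting $\eps \to 0$ finishes this step.

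For the upper bound, I would pass to the upper semi-continuous extension of $F$ to the closed set $\D$ (equal to $1/2$ on $\D \setminus \D^{\!*}$). Lemma~\ref{EnvConv} guarantees that $\widetilde\nu_{n,\r}$ is supported in $\D$, so the integral over $\D^{\!*} \cap \Ac$ is dominated by
\[
J_n = \int_{\Ac \cap \D} \exp\bigl(nF(x,M)\bigr)\,d\widetilde\nu_{n,\r}(x,M).
\]
Since $\Ac \cap \D$ is closed, $F$ is usc, and $F \leq 1/2$ is uniformly bounded above, the standard Varadhan upper bound (cf.~\cite{DZ}) yields
\[
\limsupn \frac{1}{n}\ln J_n \leq \sup_{\Ac \cap \D}\bigl(F - I\bigr).
\]

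The main obstacle is to upgrade the pointwise strict inequality $F < I$ on $\D \setminus \{(0,\Sigma)\}$ into a strictly negative supremum on the possibly non-compact set $\Ac \cap \D$. I would argue by contradiction, using that $I$ is a good rate function: a sequence $(x_k,M_k) \in \Ac \cap \D$ with $F(x_k,M_k) - I(x_k,M_k) \to 0$ must satisfy $I(x_k,M_k) \leq 1$ eventually (because $F \leq 1/2$), hence lies in the compact level set $\{I \leq 1\}$. After extracting a convergent subsequence with limit $(x^*,M^*) \in \Ac \cap \D$ (both sets being closed), the upper semi-continuity of $F$ and the lower semi-continuity of $I$ give $F(x^*,M^*) \geq I(x^*,M^*)$; combined with $I \geq F$ on $\D$ from Proposition~\ref{minI-Fd(2)}, this forces equality and hence $(x^*,M^*) = (0,\Sigma)$, contradicting $(0,\Sigma) \notin \Ac$. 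Therefore $\sup_{\Ac \cap \D}(F - I) < 0$, which together with the Varadhan bound completes the plan.
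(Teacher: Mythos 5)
Your proposal is correct and follows essentially the same route as the paper: the LDP lower bound near $(0,\Sigma)$ for $\liminf_n \frac{1}{n}\ln Z_n \geq 0$, and Varadhan's upper bound with the usc extension of $F$ to $\D$ combined with Proposition~\ref{minI-Fd(2)} for the second claim. Your compactness argument via the level set $\{I\leq 1\}$ merely spells out the step the paper states in one line (``since $F$ is upper semi-continuous and $I$ is a good rate function''), and your $\eps$-ball in the lower bound is a harmless variant of the paper's direct use of $F\geq 0$ on $\D^{\!*}$.
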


\begin{proof} The set ${\Delta}^{\!\!\!\!\raise4pt\hbox{$\scriptstyle o$}}$, the interior of $\D^{\!*}$, contains $(0,\Sigma)$ thus the large deviations principle satisfied by  $(\widetilde{\nu}_{n,\r})_{n\geq 1}$ implies that
\begin{multline*}
\liminfn \frac{1}{n}\ln Z_n=\liminfn \frac{1}{n} \ln \int_{\D^{\!*}}\exp\left(\frac{n\langle M^{-1}x,x \rangle}{2}\right)\,d\widetilde{\nu}_{n,\r}(x,M)\\
\geq \liminfn \frac{1}{n} \ln \widetilde{\nu}_{n,\r}(\D^{\!*})\geq -\inf\,\left\{\,I(x,M):(x,M)\in{\Delta}^{\!\!\!\!\raise4pt\hbox{$\scriptstyle o$}}\,\right\}=0.
\end{multline*}
We prove now the second inequality. Since $\r$ verifies la condition $(*)$, we have that $(0,O_{d})\in \Dro_{\L}$. Cram\'er's theorem (cf.~\cite{DZ}) implies then that $(\widetilde{\nu}_{n,\r})_{n\geq 1}$ satisfies the large deviations principle with speed $n$ and governed by the good rate function~$I$. Since $F$ is upper semi-continuous on the closed set $\D$, Varadhan's lemma (cf.~\cite{DZ}) yields
\begin{multline*}
\limsupn \frac{1}{n}\ln\int_{\D^{\!*}\cap \Ac} \exp\left(\frac{n\langle M^{-1}x,x \rangle}{2}\right)\,d\widetilde{\nu}_{n,\r}(x,M)\\
\leq  \limsupn \frac{1}{n}\ln\int_{\D\cap \Ac} \exp\left(nF(x,M)\right)\,d\widetilde{\nu}_{n,\r}(x,M)\leq \sup_{\D\cap \Ac}\,(\,F-I\,).
\end{multline*}
Since the $\r$-measure of any vector hyperplane of $\R^d$ is less than $1/\sqrt{e}$, proposition~\ref{minI-Fd(2)} implies that $I-F$ has a unique minimum at $(0,\Sigma)$ on $\D$. Since the closed subset $\D\cap \Ac$ does not contain $(0,\Sigma)$ and since $F$ is upper semi-continuous and $I$ is a good rate function, we have
\[\sup_{\D\cap \Ac}\,(\,F-I\,) < 0.\]
This proves the second inequality of the proposition.
\end{proof}
\medskip


\noindent \textbf{Proof of the law of large numbers in theorem~\ref{CVloiDimd}.} Suppose that $\r$ is symmetric, satisfies $(*)$ and that the $\r$-measure of any vector hyperplane of $\R^d$ is less than $1/\sqrt{e}$. Let us denote by $\theta_{n,\r}$ the law of $(S_{n}/n,T_{n}/n)$ under $\smash{\widetilde{\mu}_{n,\r}}$.
Let $U$ be an open neighbourhood of $(0,\Sigma)$ in $\R^{d}\times\Sc_{d}$. Proposition~\ref{TypeVaradhan2} implies that
\begin{multline*}
\limsupn \frac{1}{n}\ln \theta_{n,\r}(U^{c})=\limsupn \frac{1}{n}\ln \int_{\D^{\!*}\cap U^{c}}\exp\left(\frac{n\langle M^{-1}x,x \rangle}{2}\right)\,d\widetilde{\nu}_{n,\r}(x,M)\\
-\liminfn \frac{1}{n}\ln Z_n<0.
\end{multline*}
Hence there exist $\eps >0$ and $n_{0}\geq 1$ such that $\theta_{n,\r}(U^{c}) \leq e^{-n\eps}$ for any $n\geq n_{0}$. Therefore, for any neighbourhood $U$ of $(0,\Sigma)$,
\[\lim_{n \to +\infty}\widetilde{\mu}_{n,\r}\left(\left(\frac{S_{n}}{n},\frac{T_{n}}{n}\right)\in U^{c}\right)=0,\]
i.e., under $\smash{\widetilde{\mu}_{n,\r}}$, $(S_{n}/n,T_{n}/n)$ converges in probability to $(0,\Sigma)$.\qed




\section{Convergence in distribution of $\smash{T_n^{-1/2}\,S_n/n^{1/4}}$\\under $\widetilde{\mu}_{n,\rho}$}
\label{AstuceCasGenDimd}

\noindent In this section, we generalize theorem~1 of~\cite{GorVar} to the higher dimension in order to prove our fluctuation result.

\begin{theo} Let $\r$ be a symmetric non-degenerate probability measure on~$\R^d$ such that
\[\int_{\R^d}\|z\|^5\,d\r(z)<+\infty.\]
Let $\Sigma$ the covariance matrix of $\r$ and let $M_4$ be the function defined in theorem~\ref{CVloiDimd}.
Then, under $\widetilde{\mu}_{n,\r}$,
\[\frac{1}{n^{1/4}}\,T_n^{-1/2}\,S_n \overset{\loi}{\underset{n \to \infty}{\longrightarrow}}
\frac{\displaystyle{\exp\left(-\frac{1}{12}M_4\big(\Sigma^{-1/2}z\big)\right)\,dz}}{\displaystyle{\int_{\R^d}\exp\left(-\frac{1}{12}M_4\big(\Sigma^{-1/2}u\big)\right)\,du}}.\]
\label{TheoGorVarGen}
\end{theo}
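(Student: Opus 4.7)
My plan uses the symmetry of $\r$ to reduce the problem to a conditionally random-field critical Ising Curie--Weiss analysis, and then carries out a Hubbard--Stratonovich computation together with a Laplace-type asymptotic analysis. Since $\r$ is symmetric, I write $X_{i} = \eps_{i}Y_{i}$ with $(\eps_{i})_{i\ge 1}$ i.i.d.\ Rademacher and $(Y_{i})_{i\ge 1}$ i.i.d.\ of law $\r$, all mutually independent. Then $T_{n}=\sum_{i}Y_{i}\,^{t}Y_{i}$ depends only on the $Y_{i}$'s, whereas $S_{n}=\sum_{i}\eps_{i}Y_{i}$, so that under $\widetilde{\mu}_{n,\r}$, conditionally on $(Y_{i})$ and on $\{T_{n}\text{ invertible}\}$, the law of $(\eps_{i})$ is exactly a $d$-dimensional Ising Curie--Weiss model with quenched disorder $(Y_{i})$ at its critical temperature.

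The central computation linearises the quadratic form via the Hubbard--Stratonovich identity
\[
\exp\!\bigl(\tfrac12\|T_{n}^{-1/2}S_{n}\|^{2}\bigr) = (2\pi)^{-d/2}\int_{\R^{d}}\exp\!\bigl(-\tfrac12\|w\|^{2}+\langle w,T_{n}^{-1/2}S_{n}\rangle\bigr)\,dw,
\]
sums over $\eps\in\{-1,+1\}^{n}$ (which produces $\prod_{i}\cosh\langle w,T_{n}^{-1/2}Y_{i}\rangle$ when $g\equiv 1$), and rescales $w=n^{1/4}\xi$. Writing $V_{n}=T_{n}^{-1/2}S_{n}/n^{1/4}$, the conditional partition function becomes
\[
K_{n}(Y)=(2\pi)^{-d/2}n^{d/4}\int_{\R^{d}}\exp\!\bigl(-\tfrac{\sqrt{n}}{2}\|\xi\|^{2}\bigr)\prod_{i=1}^{n}\cosh\!\bigl(n^{1/4}\langle\xi,T_{n}^{-1/2}Y_{i}\rangle\bigr)d\xi,
\]
and the numerator $H_{n}(Y)=\E_{\eps}[g(V_{n})\exp(\frac12\|T_{n}^{-1/2}S_{n}\|^{2})]$ has an analogous representation. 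The algebraic identity $\sum_{i}\langle\xi,T_{n}^{-1/2}Y_{i}\rangle^{2}=\|\xi\|^{2}$ together with the Taylor expansion $\ln\cosh y = y^{2}/2-y^{4}/12+O(y^{6})$ ensures that the $\sqrt{n}$-order contribution of $\sum_{i}\ln\cosh(n^{1/4}\langle\xi,T_{n}^{-1/2}Y_{i}\rangle)$ exactly cancels $-\sqrt{n}\|\xi\|^{2}/2$, while the next-order contribution converges, by the strong law of large numbers applied to $T_{n}/n\to\Sigma$ and to the empirical fourth-moment tensor, to $-M_{4}(\Sigma^{-1/2}\xi)/12$. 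A concentration argument showing that the Rademacher measure tilted by the field $n^{1/4}\xi$ concentrates $V_{n}$ at $\xi$ then yields, for every $g\in C_{b}(\R^{d})$,
\[
\frac{H_{n}(Y)}{K_{n}(Y)}\ \underset{n\to\infty}{\longrightarrow}\ \frac{\int_{\R^{d}}g(\xi)\,e^{-M_{4}(\Sigma^{-1/2}\xi)/12}\,d\xi}{\int_{\R^{d}}e^{-M_{4}(\Sigma^{-1/2}\xi)/12}\,d\xi}\qquad\r^{\otimes\infty}\text{-a.s.}
\]

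The main obstacle is the Laplace-type justification of the interchange of the $\xi$-integral with the $n\to\infty$ limit, and subsequently the passage from the above $Y$-almost-sure statement to a limit for $\E_{\widetilde{\mu}_{n,\r}}[g(V_{n})]=\E_{Y}[H_{n}(Y)]/\E_{Y}[K_{n}(Y)]$. The crude pointwise bound $\ln\cosh y\le y^{2}/2$ only yields $e^{-\sqrt{n}\|\xi\|^{2}/2}\prod_{i}\cosh\le 1$, which is not integrable on $\R^{d}$. I would therefore split the $\xi$-integral into a fixed neighbourhood of the origin, where a fifth-order Taylor remainder controlled by the moment hypothesis $\int\|z\|^{5}\,d\r<\infty$ gives the pointwise convergence uniformly, and its complement, where the sharper bound $\ln\cosh y\le y^{2}/2-c\min(y^{2},y^{4})$ combined with the non-degeneracy of $\r$ (which yields coercivity $M_{4}(\Sigma^{-1/2}\xi)\ge c\|\xi\|^{4}$) furnishes an $n$-uniform, $\xi$-integrable majorant. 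A final uniform-integrability argument for the normalised conditional partition function $K_{n}(Y)/n^{d/4}$ under $\r^{\otimes n}$, together with the uniform bound $|H_{n}/K_{n}|\le\|g\|_{\infty}$, converts the a.s.\ convergence of the conditional ratio into the desired limit of $\E_{\widetilde{\mu}_{n,\r}}[g(V_{n})]$ and yields the theorem.
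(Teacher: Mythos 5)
Your plan follows the same route as the paper in all its essentials: symmetrize by writing $X_i=\eps_iY_i$ with Rademacher signs, linearize the quadratic form by Hubbard--Stratonovich, sum out the signs to produce $\prod_i\cosh$, use the exact identity $\sum_i\langle\xi,T_n^{-1/2}Y_i\rangle^2=\|\xi\|^2$ to cancel the $\sqrt{n}\|\xi\|^2/2$ term, expand $\ln\cosh$ to fifth order with the moment hypothesis controlling the remainder, and finish by a dominated-convergence argument with a two-regime majorant (your bound $\ln\cosh y\le y^2/2-c\min(y^2,y^4)$ plays exactly the role of the paper's key lemma $\sum_ig(n^{1/4}\langle\xi,a_{i,n}\rangle)\le-c\|\xi\|^4/(1+\|\xi\|^2/\sqrt n)$, and the coercivity of $M_4(\Sigma^{-1/2}\cdot)$ comes out of the same estimate).

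The one genuine divergence is how the test function is decoupled from the spins, and it is the thinnest step of your plan. The paper does not apply Hubbard--Stratonovich to the exponential weight alone: it evaluates $f$ at $W/n^{1/4}+T_n^{-1/2}S_n/n^{1/4}$ for an independent standard Gaussian $W$, so that after the change of variables $z=n^{-1/4}(w+B_n^{-1}A_n)$ the argument of $f$ \emph{is} the integration variable, the sum over $\eps$ factorizes exactly with no error term, and Slutsky's lemma removes $W/n^{1/4}$ at the very end. You instead keep $g(V_n)$ inside the spin expectation and must prove that the Rademacher measure tilted by the field $n^{1/4}\xi$ concentrates $V_n$ at $\xi$. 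This is workable for fixed $\xi$ (the tilted mean of $V_n$ is $\xi+O(\|\xi\|^3/\sqrt n)$ via $\tanh y=y+O(y^3)$ and the tilted covariance is $O(n^{-1/2})$), but you then need this replacement of $g(V_n)$ by $g(\xi)$ to be compatible with your integrable majorant over all of $\R^d$, which is real additional work that the convolution trick sidesteps entirely. Your other organizational choice --- almost-sure convergence of the conditional ratio $H_n(Y)/K_n(Y)$ followed by a uniform-integrability step for $K_n(Y)/n^{d/4}$ --- is also more roundabout than the paper's, which keeps the expectation over the $Y_i$'s inside a single dominated-convergence argument and identifies the normalization by simply taking $f\equiv1$; I would recommend adopting these two simplifications, after which your argument coincides with the paper's proof.
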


\noindent In the proof of this theorem, we show that the limiting law is well defined. Notice that, if $d=1$, then $\Sigma^{-1/2}=\s^{-1}$ and
\[\forall z\in \R\qquad M_4\big(\Sigma^{-1/2}z\big)=\frac{\mu_{4}z^4}{\s^4}.\]
Hence theorem~\ref{TheoGorVarGen} is indeed a generalization of theorem~1 of~\cite{GorVar}

\subsection{Proof of theorem~\ref{TheoGorVarGen}}

\noindent Let $(X_{n}^{k})_{n\geq d,\,1\leq k \leq n}$ be an infinite triangular array of random variables such that, for any $n\geq 1$, $(X_n^1,\dots,X_n^n)$ has the law $\widetilde{\mu}_{n,\r}$. Let us recall that
\[\forall n\geq 1\qquad S_n=X_n^1+\dots+X_n^n\qquad\mbox{and}\qquad T_{n}=X^{1}_{n}\,^{t}\!(X^{1}_{n})+\dots+X^{n}_{n}\,^{t}\!(X^{n}_{n}).\]
and that $T_n \in \Sc_d^{++}$ almost surely. We use the Hubbard-Stratonovich transformation: let $W$ be a random vector with standard multivariate Gaussian distribution and which is independent of $(X_{n}^{k})_{n\geq d,\,1\leq k \leq n}$. Let $n\geq 1$ and $f$ be a bounded continuous function on $\R^d$. We put
\[E_n=\E\left[f\left(\frac{W}{n^{1/4}}+\frac{1}{n^{1/4}}\,T_n^{-1/2}\,S_n\right)\right].\]
We introduce $(Y_i)_{i\geq 1}$ a sequence of independent random vectors with common distribution~$\r$. We denote
\[A_n=\sum_{i=1}^{n}Y_{i},\qquad B_n=\left(\sum_{i=1}^{n}Y_{i}\,^{t}\!Y_{i}\right)^{1/2}\qquad\mbox{and}\qquad \Bc_n=\big\{\mathrm{det}(B_n^2)>0\big\}.\]
We have
\begin{multline*}
E_n=\frac{1}{Z_n(2\pi)^{d/2}}\,\E\Bigg[\ind{\Bc_n}\,\int_{\R^d}f\Bigg(\frac{w}{n^{1/4}}+\frac{1}{n^{1/4}}B_n^{-1}A_n\Bigg)\\
\times\exp\Bigg(\frac{1}{2} \Bigg\langle B_n^{-2}A_n,A_n \Bigg\rangle-\frac{\|w\|^2}{2}\Bigg)\,dw\Bigg].
\end{multline*}
We make the change of variables $z=n^{-1/4}\left(w+B_n^{-1}A_n\right)$ in the integral and we~get
\[E_n=C_n\,\E\Bigg[\ind{\Bc_n }\int_{\R^d}f(z) \exp\Bigg(-\frac{\sqrt{n}\|z\|^2}{2}+n^{1/4}\big\langle z, B_n^{-1}A_n\big\rangle\Bigg)\,dz\Bigg]\]
where $C_n=n^{d/4}Z_n^{-1}(2\pi)^{-d/2}$.
Let $U_1,\dots,U_n,\eps_1,\dots,\eps_n$ be independent random variables such that the distribution of $U_i$ is $\r$ and the distribution of $\eps_i$ is $(\d_{-1}+\d_1)/2$, for any $i\in \{1,\dots,n\}$. Since $\r$ is symmetric, the random variables $\eps_1U_1,\dots,\eps_nU_n$ are also independent with common distribution~$\r$. Therefore
\[E_n=C_n\,\E\Bigg[\ind{ \Bc_n}\int_{\R^d}f(z) \exp\Bigg(-\frac{\sqrt{n}\|z\|^2}{2}+n^{1/4}\Bigg\langle z, B_n^{-1}\Bigg(\sum_{i=1}^{n}\eps_iU_{i}\Bigg)\Bigg\rangle\Bigg)\,dz\Bigg].\]
In the case where the matrix $B_n^2=U_1\,\!^{t}\!U_1+\dots+U_n\,\!^{t}\!U_n$ is invertible, we denote
\[\forall i\in\{1,\dots,n\}\qquad a_{i,n}=\Bigg(\sum_{j=1}^nU_j\,\!^{t}\!U_j\Bigg)^{-1/2}U_i.\]
By using Fubini's lemma and the independence of $\eps_i,U_i$, $i\geq 1$, we obtain
\begin{multline*}
E_n=C_n\,\E\Bigg[\ind{\Bc_n}\,\int_{\R^d}f(z)\exp\left(-\frac{\sqrt{n}\|z\|^2}{2}\right)\\
\times\E\left(\,\prod_{i=1}^n\exp\left(n^{1/4}\eps_i \langle z,a_{i,n}\rangle\right)\,\Bigg|\,(U_1,\dots,U_n)\,\right)\,dz\Bigg].
\end{multline*}
Therefore
\[E_n=C_n\E\Bigg[\ind{\Bc_n}\int_{\R^d}\!f(z)\exp\left(-\frac{\sqrt{n}\|z\|^2}{2}\right)\exp\left(\sum_{i=1}^n\ln\mathrm{cosh}\,(n^{1/4} \langle z,a_{i,n}\rangle)\right)\,dz\Bigg].\]
We define the function $g$ by
\[\forall y\in \R\qquad g(y)=\ln\mathrm{cosh}\,y-\frac{y^2}{2}.\]
It is easy to see that $g(y)<0$ if $y>0$. We notice that, for any $x$ and $y$ in $\R^d$, $\langle x,y\rangle^2=\langle x, (y \,^t\!y) x\rangle$. Therefore
\[\sum_{i=1}^n\langle z,a_{i,n}\rangle^2=\sum_{i=1}^n\langle z, (a_{i,n} \,^t\!a_{i,n}) z\rangle=\left\langle z,\left(\sum_{i=1}^n a_{i,n} \,^t\!a_{i,n}\right) z\right\rangle=\left\langle z,\mathrm{I}_d z\right\rangle=\|z\|^2.\]
As a consequence
\[E_n=C_n\,\E\left[\ind{\Bc_n}\,\int_{\R^d}f(z)\exp\left(\sum_{i=1}^n g(n^{1/4} \langle z,a_{i,n}\rangle)\right)\,dz\right].\]
Now we use Laplace's method. Let us examine the convergence of the term in the exponential: for any $z\in \R^d$ and $i\in \{1,\dots,n\}$, the Taylor-Lagrange formula states that there exists a random variable $\xi_{n,i}$ such that
\[g\big(n^{1/4}\langle z,a_{i,n}\rangle\big)=-\frac{n\langle z,a_{i,n}\rangle^4}{12}+\frac{n^{3/2}\langle z,a_{i,n}\rangle^5}{n^{1/4}5!}g^{(5)}(\xi_{n,i}).\]
Let $z\in \R^d$. We have
\[n\sum_{i=1}^n \langle z,a_{i,n}\rangle^4=n\sum_{i=1}^n \left\langle B_n^{-1} z,U_i\right\rangle^4=\frac{1}{n}\sum_{i=1}^n \left\langle \sqrt{n} B_n^{-1}z,U_i\right\rangle^4.\]
We denote $\zeta_n=\sqrt{n} B_n^{-1}z$. We have
\begin{multline*}
n\sum_{i=1}^n \langle z,a_{i,n}\rangle^4=\frac{1}{n}\sum_{i=1}^n \left\langle \z_n,U_i\right\rangle^4=\frac{1}{n}\sum_{i=1}^n \left( \sum_{j=1}^d(\z_n)_{j}(U_i)_j\right)^4\\
=\sum_{1\leq j_1,j_2,j_3,j_4\leq d}(\z_n)_{j_1}(\z_n)_{j_2}(\z_n)_{j_3}(\z_n)_{j_4}\,\frac{1}{n}\sum_{i=1}^n(U_i)_{j_1}(U_i)_{j_2}(U_i)_{j_3}(U_i)_{j_4}.
\end{multline*}
Since $\r$ is non-degenerate, its covariance matrix $\Sigma$ is invertible. Moreover $\r$ has a finite fourth moment thus the law of large number implies that
\[\z_n\overset{\mbox{a.s}}{\underset{n\to+\infty}{\longrightarrow}} \Sigma^{-1/2}z,\]
and that, for any $(j_1,j_2,j_3,j_4)\in \{1,\dots,d\}^4$,
\[\frac{1}{n}\sum_{i=1}^n(U_i)_{j_1}(U_i)_{j_2}(U_i)_{j_3}(U_i)_{j_4}\overset{\mbox{a.s}}{\underset{n\to+\infty}{\longrightarrow}} \int_{\R^d}y_{j_1}y_{j_2}y_{j_3}y_{j_4}\,d\r(y).\]
As a consequence
\[n\sum_{i=1}^n \langle z,a_{i,n}\rangle^4\overset{\mbox{a.s}}{\underset{n\to+\infty}{\longrightarrow}} M_4\left(\Sigma^{-1/2}z\right).\]
Since $\r$ has a finite fifth moment, we prove similarly that
\[n^{3/2} \sum_{i=1}^n \langle z,a_{i,n}\rangle^5\overset{\mbox{a.s}}{\underset{n\to+\infty}{\longrightarrow}} M_5\left(\Sigma^{-1/2}z\right),\]
where, for any $z\in \R^d$,
\[M_5(z)=\sum_{1\leq j_1,j_2,j_3,j_4,j_5\leq d}\left(\int_{\R^d}y_{j_1}y_{j_2}y_{j_3}y_{j_4}y_{j_5}\,d\r(y)\right)z_{j_1}z_{j_2}z_{j_3}z_{j_4}z_{j_5}.\]
Finally, by a simple computation, we see that $g^{(5)}$ is bounded over $\R$. Hence
\[\forall z\in \R^d\qquad \sum_{i=1}^n g\big(n^{1/4}\langle z,a_{i,n}\rangle\big)\overset{\mbox{a.s}}{\underset{n\to+\infty}{\longrightarrow}}-\frac{1}{12}M_4\left(\Sigma^{-1/2}z\right).\]

\begin{lem} There exists $c>0$ such that
\[\forall z\in \R^d\quad \forall n\geq 1\qquad \sum_{i=1}^n g\big(n^{1/4}\langle z,a_{i,n}\rangle\big)\leq -\frac{c\|z\|^4}{1+\|z\|^2/\sqrt{n}}.\]
\end{lem}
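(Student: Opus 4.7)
The plan is to reduce the lemma to a scalar inequality of the form $g(y)\leq -c\,y^4/(1+y^2)$, apply it term by term to $y=n^{1/4}\langle z,a_{i,n}\rangle$, and then conclude by a single convexity argument based on the trace identity $\sum_{i=1}^n a_{i,n}\,^t\!a_{i,n}=\mathrm{I}_d$ already established just above the lemma (which gives $\sum_{i=1}^n\langle z,a_{i,n}\rangle^2=\|z\|^2$).

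First I would prove the existence of $c>0$ such that $g(y)\leq -c\,y^4/(1+y^2)$ for every $y\in\R$. Consider the even continuous function $h(y):=-g(y)(1+y^2)/y^4$ on $\R\setminus\{0\}$. The expansion $\ln\mathrm{cosh}\,y=y^2/2-y^4/12+O(y^6)$ gives $g(y)=-y^4/12+O(y^6)$ near $0$, so $h(y)\to 1/12$ as $y\to 0$. The asymptotics $\ln\mathrm{cosh}\,y=|y|-\ln 2+o(1)$ as $|y|\to\infty$ give $g(y)\sim -y^2/2$, so $h(y)\to 1/2$. Since $g(y)<0$ for all $y\neq 0$ (because $1-\mathrm{cosh}\,y\,e^{-y^2/2}$ vanishes only at $0$, as already used in the proof of proposition~\ref{minI-Fd(1)}), $h$ is strictly positive on $\R\setminus\{0\}$ and extends continuously to a positive function on $[0,+\infty]$, hence admits a positive lower bound $c$.

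Applying this termwise with $y=n^{1/4}\langle z,a_{i,n}\rangle$ and summing over $i$ yields
\[\sum_{i=1}^n g\big(n^{1/4}\langle z,a_{i,n}\rangle\big)\leq -c\sum_{i=1}^n \frac{n\,\langle z,a_{i,n}\rangle^4}{1+\sqrt{n}\,\langle z,a_{i,n}\rangle^2}.\]
Set $s_i:=\sqrt{n}\,\langle z,a_{i,n}\rangle^2\geq 0$, so that $S:=\sum_{i=1}^n s_i=\sqrt{n}\,\|z\|^2$ by the identity $\sum_{i=1}^n \langle z,a_{i,n}\rangle^2=\|z\|^2$ recalled above. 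The lemma then reduces to the inequality
\[\sum_{i=1}^n \frac{s_i^2}{1+s_i}\;\geq\; \frac{S^2}{n+S}\;=\;\frac{\|z\|^4}{1+\|z\|^2/\sqrt{n}}.\]
The function $f(s)=s^2/(1+s)$ satisfies $f''(s)=2(1+s)^{-3}>0$ on $[0,+\infty)$, so it is convex there. Jensen's inequality applied to $s_1,\dots,s_n$ with equal weights $1/n$ gives $\sum_{i=1}^n f(s_i)\geq n\,f(S/n)=S^2/(n+S)$, which is exactly the bound needed.

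The only analytic step is the scalar inequality for $g$, and it is a routine asymptotic check; the rest is a one-line convexity argument on the unit-trace configuration. I do not foresee any real obstacle.
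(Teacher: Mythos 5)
Your proof is correct and follows essentially the same route as the proof the paper points to (lemma~3 of~\cite{GorVar}): a pointwise scalar bound $g(y)\leq -c\,y^4/(1+y^2)$ obtained from the behaviour of $g$ at $0$ and at infinity, combined with the normalization $\sum_{i=1}^n\langle z,a_{i,n}\rangle^2=\|z\|^2$ and the inequality $\sum_i s_i^2/(1+s_i)\geq (\sum_i s_i)^2/(n+\sum_i s_i)$, which you get by Jensen where the reference uses Cauchy--Schwarz. Since the paper itself omits the details, your write-up is a valid self-contained substitute.
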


\noindent The proof of this lemma follows the same lines than the proof of lemma~3~in~\cite{GorVar}.\medskip


\noindent If $\|z\|\leq n^{1/4}$ then $1+\|z\|^2/\sqrt{n}\leq 2$ and thus, by the previous lemma,
\[\left|\ind{\Bc_n}\,\ind{\|z\|\leq n^{1/4}}\,\exp\left(\sum_{i=1}^n g\big(n^{1/4}\langle z,a_{i,n}\rangle\big)\right)\right|\leq \exp\left(-\frac{c\|z\|^4}{2}\right).\]
Thus the dominated convergence theorem implies that
\[z\longmapsto \exp\left(-M_4\left(\Sigma^{-1/2}z\right)/12\right)\]
is integrable on $\R^d$ and that
\begin{multline*}
\E\left[\ind{\Bc_n}\,\int_{\R^d}\ind{|z|\leq n^{1/4}}\,f\left(z\right)\exp\left(\sum_{i=1}^n g\big(n^{1/4}\langle z,a_{i,n}\rangle\big)\right)\,dz\right]\\
\underset{n\to+\infty}{\longrightarrow} \int_{\R^d}f(z)\exp\left(-\frac{1}{12}M_4\left(\Sigma^{-1/2}z\right)\right)\,dz.
\end{multline*}

\noindent If $\|z\|> n^{1/4}$ then $1+\|z\|^2/\sqrt{n}\leq 2\|z\|^2/\sqrt{n}$ and thus, by the previous lemma,
\begin{multline*}
\E\left[\ind{\Bc_n}\,\int_{\R^d}\ind{|z|> n^{1/4}}\,f\left(z\right)\exp\left(\sum_{i=1}^n g\big(n^{1/4}\langle z,a_{i,n}\rangle\big)\right)\,dz\right]\\
\leq \left\|f\right\|_{\infty}\int_{\R^d}\exp\left(-\frac{c\sqrt{n}\|z\|^2}{2}\right)\,dz=\frac{\left\|f\right\|_{\infty}(2\pi)^{d/2}}{n^{d/4}c^{d/2}}\underset{n\to+\infty}{\longrightarrow} 0,
\end{multline*}
and thus
\begin{multline*}
\frac{E_n}{C_n}=\E\left[\ind{\Bc_n}\,\int_{\R^d}f\left(z\right)\exp\left(\sum_{i=1}^n g\big(n^{1/4}\langle z,a_{i,n}\rangle\big)\right)\,dz\right]\\
\underset{n\to+\infty}{\longrightarrow} \int_{\R^d}f(z)\exp\left(-\frac{1}{12}M_4\left(\Sigma^{-1/2}z\right)\right)\,dz.
\end{multline*}
If we take $f=1$, we get
\[\frac{1}{C_n}=\frac{Z_n(2\pi)^{d/2}}{n^{d/4}}\underset{n\to+\infty}{\longrightarrow}\int_{\R^d}\exp\left(-\frac{1}{12}M_4\left(\Sigma^{-1/2}z\right)\right)\,dz.\]
By Paul Levy's theorem, we have then
\[\frac{W}{n^{1/4}}+\frac{1}{n^{1/4}}\,T_n^{-1/2}\,S_n \overset{\loi}{\underset{n \to \infty}{\longrightarrow}}
\frac{\displaystyle{\exp\left(-\frac{1}{12}M_4\big(\Sigma^{-1/2}z\big)\right)\,dz}}{\displaystyle{\int_{\R^d}\exp\left(-\frac{1}{12}M_4\big(\Sigma^{-1/2}u\big)\right)\,du}}.\]
Since $(Wn^{-1/4})_{n\geq 1}$ converges in distribution to $0$, Slutsky lemma (theorem~3.9 of~\cite{BillCVPM}) implies the convergence in distribution of theorem~\ref{TheoGorVarGen}.\medskip

\noindent We remark that the hypothesis that $\r$ has a finite fifth moment may certainly be weakened by assuming instead that
\[\exists \eps>0\qquad \int_{\R^d}\|z\|^{4+\eps}\,d\r(z)<+\infty.\]

\subsection{Proof of the fluctuation result in theorem~\ref{CVloiDimd}.}
\label{SecCVloiDimd}

In section~\ref{ConvProb}, we proved the law of large numbers in theorem~\ref{CVloiDimd}. It implies that, under $\widetilde{\mu}_{n,\r}$, $T_{n}/n$ converges in probability to $\Sigma$. Moreover the hypothesis $(*)$ implies that $(0,O_d)\in \Dro_{\L}$ and thus $\r$ has finite moments of all orders. Theorem~\ref{TheoGorVarGen} and Slutsky lemma yield 
\[\frac{S_n}{n^{3/4}}=\left(\frac{T_n}{n}\right)^{1/2}\times\frac{1}{n^{1/4}}\,T_n^{-1/2}\,S_n\overset{\loi}{\underset{n \to \infty}{\longrightarrow}} \frac{\displaystyle{\exp\left(-\frac{1}{12}M_4\big(\Sigma^{-1}z\big)\right)\,dz}}{\displaystyle{\int_{\R^d}\exp\left(-\frac{1}{12}M_4\big(\Sigma^{-1}u\big)\right)\,du}}.\]
Theorem~\ref{CVloiDimd} is proved.

\bibliographystyle{plain}
\bibliography{biblio}\bigskip

\begin{flushleft}
\footnotesize \textbf{Matthias Gorny}\\
Laboratoire de Math\'ematiques d'Orsay,\\
Universit\'e Paris-Sud, CNRS, Universit\'e Paris-Saclay,\\
91405 Orsay, France\\
\texttt{matthias.gorny@math.u-psud.fr}
\end{flushleft}

\end{document}